\documentclass[onefignum,onetabnum]{siamart190516}


\usepackage{algorithmicx}
\usepackage{algpseudocode}
\usepackage{lipsum}
\usepackage{amssymb}
\usepackage{amsmath}
\usepackage{amsfonts}
\usepackage{graphicx}
\usepackage{epstopdf}

\usepackage{amsopn}
\usepackage{microtype}
\usepackage{subfigure}

\ifpdf
  \DeclareGraphicsExtensions{.eps,.pdf,.png,.jpg}
\else
  \DeclareGraphicsExtensions{.eps}
\fi

\def\TITLE{An Improvement of the Resolvent Estimate in the Kreiss
Matrix Theorem}
\def\SHORTTITLE{Improvement of Resolvent Estimate}
\def\AUTHOR{Zeyu Jin}

\headers{\SHORTTITLE}{\AUTHOR}

\title{\TITLE}

\author{\AUTHOR\thanks{School of Mathematical Sciences, Peking 
University (\email{jinzy@pku.edu.cn}).}}

\newsiamremark{assumption}{Assumption}
\newsiamremark{example}{Example}
\newsiamremark{remark}{Remark}
\newsiamremark{experiment}{Experiment}
\newsiamremark{hypothesis}{Hypothesis}
\crefname{hypothesis}{Hypothesis}{Hypotheses}
\crefname{assumption}{Assumption}{Assumptions}
\newsiamthm{claim}{Claim}

\DeclareMathOperator{\diag}{diag}

\newcommand{\mc}{\mathbb{C}}
\newcommand{\mr}{\mathbb{R}}
\newcommand{\ms}{\mathcal{S}}
\newcommand{\mh}{\mathbb{H}}
\newcommand{\mk}{\mathcal{K}}
\newcommand{\mf}{\mathcal{F}}
\newcommand{\mn}{\mathbb{N}}
\newcommand{\mt}{\mathcal{T}}
\newcommand{\norm}[1]{\left\|{#1}\right\|}
\newcommand{\rd}{\mathrm{d}}
\renewcommand{\complement}{\mathsf{c}}


\ifpdf
\hypersetup{
  pdftitle={\TITLE},
  pdfauthor={\AUTHOR}
}
\fi

\usepackage{geometry}

\begin{document}

\maketitle

\begin{abstract}
  We improve the resolvent estimate in the Kreiss matrix theorem for 
  a set of matrices that generate uniformly bounded semigroups.
  The new resolvent estimate is proved to be equivalent to Kreiss's 
  resolvent condition, and it better describes the behavior of the 
  resolvents at infinity. 
\end{abstract}

\begin{keywords}
  Kreiss matrix theorem; resolvent estimate; semigroup
\end{keywords}

\begin{AMS}
  47A10, 15A45
\end{AMS}

\section{Introduction}
\label{sec:intro}

The Kreiss matrix theorem \cite{kreiss_uber_1959} is one of the  
fundamental results on the well-posedness for Cauchy problems in 
the theory of partial differential equations.
Among other things, the theorem asserts the equivalence of uniform 
boundedness of semigroups and a certain resolvent estimate for a set 
of matrices. There are many studies concerning the improvement of the 
uniform upper bounds of the semigroups of matrices under Kreiss's 
resolvent condition; see, e.g., 
\cite{leveque_resolvent_1984,van_linear_1993}. 
In this paper, we focus on the converse, that is, the improvement of 
the resolvent estimate in the Kreiss matrix theorem.
There have been several attempts in this aspect. For example, in 
\cite{miller_resolvent_1968}, Miller estimated the resolvents on 
certain contours in the left half-plane by classifying the spectrum 
of each matrix. In \cite{zarouf_sharpening_2009}, Zarouf improved the 
resolvent estimate for power-bounded matrices by using Bernstein-type 
inequalities for rational functions.

We notice that Kreiss's resolvent condition sometimes fails to give 
sharp estimates for well-posed Cauchy problems.
The main purpose of this paper is to establish a sharper resolvent 
estimate for a set of matrices that generate uniformly bounded 
semigroups.
This work is partially motivated by the following Cauchy problem:
\begin{equation}\label{equ:cauchy}
  \begin{aligned}
    \partial_t u & = \mathcal{A} u + f(x, t), \\
    u|_{t = 0} & = 0,
  \end{aligned}
\end{equation}
where $x \in \mr^n$ and $t \in [0, +\infty)$ are spatial and temporal 
variables, respectively; $u$ is the unknown $m$-vector function of 
$(x, t)$; the source term $f$ is an $m$-vector function of $(x, t)$;
the differential operator $\mathcal{A}$ is defined by 
$(\mathcal{A} v)^{\wedge}(\xi) := A(\xi) \hat{v}(\xi)$, where 
$\hat{v}$ is the Fourier transform of a function $v$; 
the symbol of $\mathcal{A}$, that is, $A(\xi)$, is an 
$m \times m$-matrix function of $\xi \in \mr^n$.
Under the assumption of well-posedness 
\cite{kreiss_initial-boundary_2004} of \eqref{equ:cauchy}, the Kreiss 
matrix theorem yields that there exist constants $\alpha \in \mr$ 
and $K > 0$ independent of $\xi$ such that 
\footnote{Recall that the $p$-norm of a given matrix 
$A \in \mc^{n \times n}$ is defined as 
\begin{displaymath}
  \norm{A}_p := \sup_{x \in \mc^n \setminus \left\{ 0 \right\}}
  \frac{\norm{A x}_p}{\norm{x}_p},
\end{displaymath}
where $\norm{x}_p$ is the $p$-norm of the vector $x$.
Throughout the paper, the norm $\norm{\cdot}_2$ is abbreviated as 
$\norm{\cdot}$ for both vectors and matrices.}
\begin{equation}\label{equ:cauchy_KMTbound}
  \norm{(z I - A(\xi))^{-1}} \le \frac{K}{\Re(z) - \alpha}, 
  \quad z \in \mc, \quad \Re(z) > \alpha.
\end{equation}
By applying Fourier transform and Laplace transform on both sides of 
\eqref{equ:cauchy}, one can obtain that 
\footnote{Assume that $\hat{v}(\xi, t)$ is the Fourier transform of 
$v(x, t)$. The Laplace transform of $\hat{v}$ can be defined as
\begin{displaymath}
  \tilde{v}(\xi, z) := \int_{0}^{+\infty} e^{-z t} \hat{v}(\xi, t) 
  \,\rd t,
\end{displaymath}
where $z$ is a complex number such that the integral is well-defined.}
\begin{displaymath}
  z \tilde{u}(\xi, z) = A(\xi) \tilde{u}(\xi, z) 
    + \tilde{f}(\xi, z),
\end{displaymath}
under some appropriate assumptions on the functions $A(\xi)$ and 
$f(x, t)$.
Therefore, one has that $\tilde{u}(\xi, z) = (z I - A(\xi))^{-1} 
\tilde{f}(\xi, z)$.
The inverse Laplace transform of $\tilde{u}(\xi, z)$ is determined 
by  
\begin{displaymath}
  \hat{u}(\xi, t) = \frac{1}{2 \pi i} 
    \int_{\gamma - i \infty}^{\gamma + i \infty}
    e^{z t} (z I - A(\xi))^{-1} \tilde{f}(\xi, z) \,\rd z,
\end{displaymath}
where $\gamma$ is a real number such that the contour line lies in 
the region of convergence of $\tilde{u}(\xi, z)$ for each 
$\xi \in \mr^n$, and the integral is understood in the sense of 
principal value.
One may estimate $\hat{u}(\xi, t)$ as 
\begin{displaymath}
  |\hat{u}(\xi, t)| \le \frac{e^{\gamma t}}{2 \pi} 
    \int_{- \infty}^{+ \infty} \norm{(z I - A(\xi))^{-1}}
    \cdot \norm{\tilde{f}(\xi, z)} \,\rd y,
\end{displaymath}
where $z = \gamma + i y$. The Kreiss matrix theorem gives an upper 
bound of $\norm{(z I - A(\xi))^{-1}}$ as in 
\eqref{equ:cauchy_KMTbound}, which is only dependent on the real part 
of $z$. However, this bound is too rough, since for fixed $\xi$ and 
$\gamma$, one has that 
\begin{displaymath}
  \norm{(z I - A(\xi))^{-1}} = \mathcal{O}(|y|^{-1}), 
  \quad \text{ as } |y| \rightarrow + \infty.
\end{displaymath}
To obtain a better upper bound of $\hat{u}(\xi, t)$, we need a sharper
estimate of the resolvent $(z I - A(\xi))^{-1}$.
Our new resolvent estimate better describes how the resolvents decay 
at infinity; see \Cref{thm:main_resolvent}.

The rest of this paper is organized as follows. In \Cref{sec:pre}, we 
recall the Kreiss matrix theorem given in 
\cite{kreiss_uber_1959,kreiss_initial-boundary_2004} and 
propose our main result, which is proved in \Cref{sec:proof}. 
In \Cref{sec:disccussion}, we make several remarks on the 
generalizations of our main theorem and its relationships with some 
previous work \cite{miller_resolvent_1968,zarouf_sharpening_2009}. 
The paper ends with a brief summary and conclusion in 
\Cref{sec:concl}.
\section{Preliminaries}
\label{sec:pre}

For the sake of simplicity, we introduce the following definitions.
Similar definitions can be found in \cite{Yong_singular_1992}.
\begin{definition}
  A matrix $M \in \mc^{n \times n}$ is called quasi-stable, if 
  \begin{displaymath}
    \sup_{t \ge 0} \norm{e^{M t}} < + \infty.
  \end{displaymath}
  A set of matrices $\mf \subset \mc^{n \times n}$ is 
  called uniformly quasi-stable, if 
  \begin{displaymath}
    \sup_{M \in \mf} \sup_{t \ge 0} \norm{e^{M t}} < + \infty.
  \end{displaymath}
\end{definition}

The Kreiss matrix theorem 
\cite{kreiss_uber_1959,kreiss_initial-boundary_2004} gives several 
necessary and sufficient conditions for the uniform quasi-stability 
of a set of matrices. The theorem is stated as follows.
\begin{theorem}[Kreiss matrix theorem. See Theorem 2.3.2 of 
  \cite{kreiss_initial-boundary_2004}]
  \label{thm:KMT}
  Let $\mf$ denote a set of matrices in $\mc^{n \times n}$.
  The following four conditions are equivalent.
  \begin{enumerate}
    \item There exists a constant $K_1$ such that  
      $\norm{e^{M t}} \le K_1$ for all $M \in \mf$ and 
      $t \ge 0$.
    \item There exists a constant $K_2$ such that 
      \begin{equation}\label{equ:KMT_resolvent}
        \norm{(z I - M)^{-1}} \le \frac{K_2}{\Re(z)}, 
      \end{equation}
      for all $M \in \mf$ and $\Re(z) > 0$.
    \item There exist constants $K_{31}$, $K_{32}$ such that 
      for each $M \in \mf$, there exists a 
      transformation $S = S(M)$ with 
      $\norm{S} + \norm{S^{-1}} \le K_{31}$,
      the matrix $S M S^{-1}$ is upper triangular,
      \begin{displaymath}
        S M S^{-1} = \begin{pmatrix}
          b_{11} & b_{12} & \cdots & b_{1n} \\
               & b_{22} & \cdots & b_{2n} \\
               &    & \ddots & \vdots \\
               &    &    & b_{nn} \\
        \end{pmatrix},
      \end{displaymath}
      the diagonal is ordered, 
      \begin{displaymath}
        0 \ge \Re(b_{11}) \ge \Re(b_{22}) \ge \cdots \ge 
        \Re(b_{nn}),
      \end{displaymath}
      and the upper diagonal elements satisfy the estimate 
      \begin{displaymath}
        |b_{ij}| \le K_{32} |\Re(b_{ii})|, \quad 
          1 \le i < j \le n.
      \end{displaymath}
    \item There exists a positive constant $K_4$ such that 
      for each $M \in \mf$, there exists a Hermitian 
      matrix $H = H(M)$ such that 
      \begin{displaymath}
        K_4^{-1} I \le H \le K_4 I, \quad H M + M^* H \le 0.
      \end{displaymath}
  \end{enumerate}
\end{theorem}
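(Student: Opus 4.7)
The plan is to establish the cycle of implications $1 \Rightarrow 2 \Rightarrow 3 \Rightarrow 4 \Rightarrow 1$. The direction $1 \Rightarrow 2$ is immediate from the Laplace representation of the resolvent: whenever $\Re(z) > 0$, the integral $(zI-M)^{-1} = \int_0^{+\infty} e^{-zt} e^{Mt}\,\rd t$ converges absolutely, so the uniform bound $\norm{e^{Mt}} \le K_1$ yields $\norm{(zI-M)^{-1}} \le K_1/\Re(z)$. The direction $4 \Rightarrow 1$ is a standard energy argument: the Hermitian form $V(x) = x^* H x$ satisfies $\frac{\rd}{\rd t} V(e^{Mt}x_0) = (e^{Mt}x_0)^*(HM+M^*H)(e^{Mt}x_0) \le 0$, hence $V(e^{Mt}x_0) \le V(x_0)$, and the spectral bounds on $H$ translate this into $\norm{e^{Mt}}^2 \le K_4^2$.

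For $3 \Rightarrow 4$, I would set $H := S^* D S$ with $D = \diag(1, \delta, \delta^2, \ldots, \delta^{n-1})$ for a small parameter $\delta > 0$ to be chosen. The bounds on $\norm{S}$ and $\norm{S^{-1}}$ automatically deliver the two-sided spectral bound on $H$ once $\delta$ is fixed. The inequality $HM + M^*H \le 0$ is equivalent to $DB + B^*D \le 0$ with $B := S M S^{-1}$; expanding the associated quadratic form, the diagonal contributes $2\delta^{i-1}\Re(b_{ii}) \le 0$, while the off-diagonal cross terms are controlled via $|b_{ij}| \le K_{32}|\Re(b_{ii})|$ and absorbed into the negative diagonal by a weighted Cauchy--Schwarz, provided $\delta$ is chosen small enough in terms of $n$ and $K_{32}$.

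The main obstacle is $2 \Rightarrow 3$. Starting from a Schur decomposition $M = Q R Q^*$ with the diagonal reordered so that $\Re(r_{11}) \ge \cdots \ge \Re(r_{nn})$, the estimate \eqref{equ:KMT_resolvent} immediately forces $\Re(r_{ii}) \le 0$, since otherwise the resolvent would have a pole with $\Re(z) > 0$. The delicate part is controlling the off-diagonal entries of $R$. For an upper triangular matrix, the entries of $(zI-R)^{-1}$ admit an explicit combinatorial expansion as a sum over increasing chains $i = k_0 < k_1 < \cdots < k_\ell = j$ of terms of the form $\prod_p r_{k_{p-1}k_p} / \prod_p(z - r_{k_p k_p})$. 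The plan is to probe $(zI-R)^{-1}e_j$ at carefully chosen $z$ near the diagonal entries, inducting on the bandwidth $j - i$: at each stage one selects $z$ so that a single chain dominates the expansion, and the resolvent bound then forces $|r_{ij}|$ to be controlled by $|\Re(r_{ii})|$ together with the previously bounded entries. A final diagonal conjugation by $\diag(1, \gamma, \ldots, \gamma^{n-1})$ renormalizes the constants into the form required in condition 3, and composing with $Q^*$ produces the desired transformation $S$ with $\norm{S} + \norm{S^{-1}}$ depending only on $K_2$ and $n$. The principal technical burden is the bookkeeping needed to keep every constant uniform over $\mf$.
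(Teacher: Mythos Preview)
The paper does not prove \Cref{thm:KMT}; it is quoted verbatim as Theorem~2.3.2 of \cite{kreiss_initial-boundary_2004} and used as a black box in the proof of \Cref{thm:main}. There is therefore no in-paper argument to compare your proposal against.

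On its own merits, your outline is the classical cycle $1\Rightarrow 2\Rightarrow 3\Rightarrow 4\Rightarrow 1$ from Kreiss's work and \cite{kreiss_initial-boundary_2004}. The steps $1\Rightarrow 2$ and $4\Rightarrow 1$ are correct as stated. In $3\Rightarrow 4$ the construction $H=S^*DS$ with $D=\diag(1,\delta,\ldots,\delta^{n-1})$ is right, but the parameter must be taken \emph{large}, not small: substituting $y_i=\delta^{(i-1)/2}x_i$ into $x^*(DB+B^*D)x$ turns the off-diagonal cross terms into $2\Re\sum_{i<j}\delta^{(i-j)/2}b_{ij}\,\overline{y_i}y_j$, and the factor $\delta^{(i-j)/2}$ is small only when $\delta>1$; combined with $|b_{ij}|\le K_{32}|\Re(b_{ii})|$ and the monotonicity $|\Re(b_{ii})|\le|\Re(b_{jj})|$, a choice such as $\delta\ge(K_{32}+1)^2$ lets the diagonal absorb the rest. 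For $2\Rightarrow 3$ your plan---Schur form, the chain expansion of the entries of $(zI-R)^{-1}$, induction on the bandwidth $j-i$, and probing at well-chosen $z$---is indeed the standard route, and you are right that it carries the bulk of the work. The one genuinely delicate issue you do not flag is the case of coinciding or nearly coinciding diagonal entries $r_{ii}$, where no choice of $z$ isolates a single chain; the classical proof handles this by clustering such eigenvalues and arguing blockwise, and that is where most of the ``bookkeeping'' you allude to actually resides.
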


Hereinafter, we denote the open right half-plane as 
\begin{displaymath}
  \mh = \left\{ z \in \mc : \Re(z) > 0 \right\}.
\end{displaymath}
Given a matrix $M \in \mc^{n \times n}$, we denote the spectrum of
$M$ as $\sigma(M)$, and define 
\footnote{When $\sigma(M)$ is a subset of half-plane $\mh$, we define 
that $\mk(M) := + \infty$.}
\begin{displaymath}
  \mk(M) := \sup_{z \in \mh} \frac{\norm{(z I - M)^{-1}}}
  {\max_{\lambda \in \sigma(M) \setminus \mh} |z - \lambda|^{-1}},
\end{displaymath}
which turns out to be a new measurement of uniform quasi-stability.
Our main result is presented as follows.
\begin{theorem}\label{thm:main}
  Let $\mf$ denote a set of matrices in $\mc^{n \times n}$.
  The set of matrices $\mf$ is uniformly quasi-stable, if and only if 
  \begin{equation}\label{equ:uniformKM}
    \sup_{M \in \mf} \mk(M) < + \infty.
  \end{equation}
\end{theorem}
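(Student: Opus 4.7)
The plan is to prove each implication of \Cref{thm:main} by reducing to a suitable condition of the Kreiss matrix theorem (\Cref{thm:KMT}). The easier direction goes from \eqref{equ:uniformKM} to uniform quasi-stability. First, finiteness of $\mk(M)$ forces $\sigma(M) \cap \mh = \emptyset$: if some $\lambda_0 \in \sigma(M)$ had $\Re(\lambda_0) > 0$, then $\norm{(zI - M)^{-1}}$ would blow up as $z \to \lambda_0$ within $\mh$, while $\min_{\lambda \in \sigma(M) \setminus \mh} |z - \lambda|$ would remain bounded below. Hence every eigenvalue of every $M \in \mf$ satisfies $\Re(\lambda) \le 0$, and for any $z \in \mh$ the inequality $|z - \lambda| \ge \Re(z) - \Re(\lambda) \ge \Re(z)$ holds for each $\lambda \in \sigma(M)$. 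Thus $\norm{(zI - M)^{-1}} \le \mk(M)/\Re(z)$, so uniform finiteness of $\mk$ yields Kreiss's resolvent condition \eqref{equ:KMT_resolvent}, and \Cref{thm:KMT} finishes this direction.

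For the converse, I would exploit Condition 3 of \Cref{thm:KMT}. Uniform quasi-stability provides, for each $M \in \mf$, a similarity $B = S M S^{-1}$ with $\norm{S} + \norm{S^{-1}} \le K_{31}$ and $B$ upper triangular with diagonal $b_{11}, \dots, b_{nn}$ satisfying $0 \ge \Re(b_{11}) \ge \cdots \ge \Re(b_{nn})$ and $|b_{ij}| \le K_{32} |\Re(b_{ii})|$ for $i < j$. Since $\sigma(M) = \{b_{11}, \dots, b_{nn}\} \subset \overline{\mc \setminus \mh}$, bounding $\mk(M)$ reduces to showing
\[
  \norm{(zI - B)^{-1}} \le \frac{C}{\min_{1 \le i \le n} |z - b_{ii}|}, \quad z \in \mh,
\]
with a constant $C$ depending only on $n$, $K_{31}$, and $K_{32}$.

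The key step is to write $B = D + N$, where $D = \diag(b_{11}, \dots, b_{nn})$ and $N$ is strictly upper triangular. Since $N^n = 0$, the Neumann expansion
\[
  (zI - B)^{-1} = \sum_{k = 0}^{n-1} \bigl[(zI - D)^{-1} N\bigr]^k (zI - D)^{-1}
\]
is a finite sum. The entries of $(zI - D)^{-1} N$ are $b_{ij}/(z - b_{ii})$ for $i < j$ and zero otherwise. For $z \in \mh$ and $\Re(b_{ii}) \le 0$, the inequality $|z - b_{ii}| \ge \Re(z) + |\Re(b_{ii})| \ge |\Re(b_{ii})|$ combined with the off-diagonal estimate gives $|b_{ij}/(z - b_{ii})| \le K_{32}$, hence a uniform bound $\norm{(zI - D)^{-1} N} \le n K_{32}$ independent of $z$. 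Plugging this into the Neumann sum and using $\norm{(zI - D)^{-1}} = 1/\min_i |z - b_{ii}|$ yields the desired estimate; combined with $\norm{(zI - M)^{-1}} \le K_{31}^2 \norm{(zI - B)^{-1}}$, it gives a uniform bound on $\mk(M)$.

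The main obstacle is precisely this Neumann-series estimate. Although $|z - b_{ii}|$ can be very small when $z$ is close to a diagonal entry, the off-diagonal bound $|b_{ij}| \le K_{32} |\Re(b_{ii})|$ from Condition 3 of \Cref{thm:KMT} provides exactly the cancellation needed to keep $(zI - D)^{-1} N$ bounded in operator norm uniformly over $z \in \mh$. Once this uniform bound is in hand, everything else is routine manipulation of the triangular form.
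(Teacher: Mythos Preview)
Your proposal is correct and follows essentially the same route as the paper: both directions reduce to \Cref{thm:KMT}, and for the hard implication you use Condition~3, split $B=D+N$, and bound the entries of $(zI-D)^{-1}N$ by $K_{32}$ via $|z-b_{ii}|\ge|\Re(b_{ii})|$. The only cosmetic difference is how the inverse of the unit upper triangular matrix $I-(zI-D)^{-1}N$ is controlled: you invoke the finite Neumann expansion $\sum_{k=0}^{n-1}[(zI-D)^{-1}N]^k$, whereas the paper proves a separate lemma (\Cref{lemma:unittri}) bounding $\norm{A^{-1}}$ for unit upper triangular $A$ via a rank-one factorization; both yield a constant depending only on $n$ and $K_{32}$.
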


The following corollary of \Cref{thm:main} gives a sharper resolvent 
estimate under Kreiss's resolvent condition \eqref{equ:KMT_resolvent}.
The new resolvent estimate better describes the behavior of the 
resolvents at infinity.

\begin{corollary}\label{thm:main_resolvent}
  If a set of matrices $\mf$ is uniformly quasi-stable, then there 
  exists a positive constant $K$ such that 
  \begin{equation}\label{equ:main_resolvent}
    \norm{(z I - M)^{-1}} \le K \max_{\lambda \in \sigma(M)} 
    |z - \lambda|^{-1},
  \end{equation}
  for all $M \in \mf$ and $z \in \mh$.
\end{corollary}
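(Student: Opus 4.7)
The plan is to read \Cref{thm:main_resolvent} off \Cref{thm:main} directly. The only formal difference between the two estimates is that the denominator in the definition of $\mk(M)$ ranges over $\sigma(M)\setminus\mh$, whereas \eqref{equ:main_resolvent} ranges over all of $\sigma(M)$; so my first step would be to verify that under the standing hypothesis these two index sets coincide. Uniform quasi-stability of $\mf$ forces $\sup_{t\ge 0}\norm{e^{M t}} < +\infty$ for every $M\in\mf$, and if $(\lambda,v)$ is any eigenpair with $v\ne 0$ then $e^{M t}v = e^{\lambda t}v$ yields $\Re(\lambda)\le 0$. Hence $\sigma(M)\cap\mh=\emptyset$, i.e., $\sigma(M)\setminus\mh=\sigma(M)$, for every $M\in\mf$. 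In particular $\sigma(M)\setminus\mh$ is nonempty, so the ``$+\infty$'' convention in the footnote of the definition of $\mk(M)$ is never triggered.

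Once this observation is in place, I would invoke \Cref{thm:main} to extract a finite constant $K := \sup_{M\in\mf}\mk(M)$. Unwinding the definition of $\mk(M)$ and substituting $\sigma(M)\setminus\mh = \sigma(M)$, I conclude, for every $M\in\mf$ and every $z\in\mh$,
\begin{displaymath}
  \norm{(zI-M)^{-1}} \le \mk(M)\max_{\lambda\in\sigma(M)\setminus\mh}|z-\lambda|^{-1} \le K\max_{\lambda\in\sigma(M)}|z-\lambda|^{-1},
\end{displaymath}
which is exactly \eqref{equ:main_resolvent}.

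The main (and really the only) point to watch is that the spectral observation $\sigma(M)\cap\mh=\emptyset$ is genuinely needed, both to identify the two maxima and to guarantee that $\mk(M)$ is actually finite in the sense of its definition rather than assigned the value $+\infty$ by default. Beyond that, the argument is a matter of unwinding definitions, and no analytical obstacle remains once \Cref{thm:main} is at our disposal.
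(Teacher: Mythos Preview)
Your proposal is correct and matches the paper's intended derivation: the corollary is stated without a separate proof precisely because it follows from \Cref{thm:main} once one notes that uniform quasi-stability forces $\sigma(M)\subset\mh^\complement$, so that $\sigma(M)\setminus\mh=\sigma(M)$ and the two maxima coincide. This spectral observation is exactly the step you supply, and the paper itself records it explicitly at the start of the necessity argument in the proof of \Cref{thm:main}.
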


\section{Proof of Main Result}
\label{sec:proof}

Let us first recall the following characterization of quasi-stability 
given in \cite{kreiss_initial-boundary_2004}.
\begin{lemma}[See Lemma 2.3.1 of \cite{kreiss_initial-boundary_2004}]
  \label{lemma:QS_character}
  For any $M \in \mc^{n \times n}$, the following two conditions are 
  equivalent.
  \begin{enumerate}
    \item The matrix $M$ is quasi-stable. 
    \item All eigenvalues $\lambda$ of the matrix $M$ have a real 
      part $\Re(\lambda) \le 0$. Furthermore, if $J_r$ is a Jordan 
      block of the Jordan matrix $J = S M S^{-1}$ which corresponds 
      to an eigenvalue $\lambda_r$ of the matrix $M$ with 
      $\Re(\lambda_r) = 0$, then $J_r$ has dimension $1 \times 1$.
  \end{enumerate}
\end{lemma}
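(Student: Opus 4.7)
The plan is to pass to the Jordan canonical form $J = S M S^{-1}$. Since $S$ is invertible, $e^{M t} = S e^{J t} S^{-1}$, so quasi-stability of $M$ is equivalent to that of $J$. The exponential of a Jordan matrix is block-diagonal in the Jordan blocks, so the question reduces to checking $\sup_{t \ge 0} \norm{e^{J_r t}}$ for each individual Jordan block $J_r$ separately.

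For a single $k_r \times k_r$ block $J_r = \lambda_r I + N_r$ with nilpotent $N_r$, the exponential has the closed form
\begin{displaymath}
  e^{J_r t} = e^{\lambda_r t} \sum_{j = 0}^{k_r - 1} \frac{t^j}{j!} N_r^j,
\end{displaymath}
and I would then distinguish four cases according to the sign of $\Re(\lambda_r)$ and the size $k_r$. If $\Re(\lambda_r) < 0$, exponential decay dominates the polynomial factor and the block contribution is bounded. If $\Re(\lambda_r) = 0$ and $k_r = 1$, then $N_r = 0$ and $\norm{e^{J_r t}} = 1$ for all $t$. If $\Re(\lambda_r) > 0$, the norm blows up trivially. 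Finally, if $\Re(\lambda_r) = 0$ and $k_r \ge 2$, the $(1, k_r)$-entry of $e^{J_r t}$ has modulus $t^{k_r - 1}/(k_r - 1)!$, which is unbounded in $t$.

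Combining these cases yields both implications at once. Condition (2) excludes precisely the two unbounded cases, so each block $e^{J_r t}$ is uniformly bounded in $t$ and thus so is $e^{M t}$, giving (2) implies (1). Conversely, if (2) fails then some eigenvalue has positive real part or some purely imaginary eigenvalue carries a Jordan block of size at least two; either way, the offending block makes $\norm{e^{J_r t}}$ unbounded. The main point requiring care is then to argue that unboundedness of a single block cannot be cancelled in $\norm{e^{J t}}$; this is not a genuine obstacle, since one can simply test $e^{J t}$ against a unit vector supported on the offending block's invariant subspace, but it is the only place where the reduction to individual blocks must be justified rigorously.
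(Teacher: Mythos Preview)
The paper does not actually prove this lemma; it merely states it and attributes it to Lemma~2.3.1 of \cite{kreiss_initial-boundary_2004}. So there is no in-paper proof to compare against. Your argument is the standard one and is correct in substance: reduce to Jordan form, compute $e^{J_r t}$ explicitly for each block, and split into the four cases you list. The observation that a block-diagonal matrix acts invariantly on each block's coordinate subspace is exactly what justifies the reduction, and your remark about testing against a vector supported in the offending block is the right way to rule out cancellation.

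One small slip: with the paper's convention $J = S M S^{-1}$ you have $e^{M t} = S^{-1} e^{J t} S$, not $S e^{J t} S^{-1}$ as you wrote. This does not affect the argument, since either way boundedness of $e^{M t}$ and $e^{J t}$ are equivalent via the fixed invertible $S$, but it is worth correcting.
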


Using \Cref{lemma:QS_character}, we can prove that the quantity 
$\mk(M)$ is actually a measurement of quasi-stability of a 
given matrix $M$.
\begin{lemma}\label{lemma:QS_KM}
  Let $M \in \mc^{n \times n}$. The matrix $M$ is quasi-stable, if 
  and only if $\mk(M) < + \infty$.
\end{lemma}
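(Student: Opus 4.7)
The plan is to prove both directions of the equivalence by reducing to the Jordan canonical form, exploiting the precise characterization of quasi-stability from \Cref{lemma:QS_character}.

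For the forward direction, I would fix a decomposition $M = S^{-1} J S$ with $J$ in Jordan form, so $(zI - M)^{-1} = S^{-1}(zI - J)^{-1} S$. Since the $2$-norm of a block-diagonal matrix equals the maximum of the blocks' norms, it suffices to bound each $\|(zI - J_r)^{-1}\|$ where $J_r = \lambda_r I + N_r$ is the Jordan block of size $k_r$ attached to an eigenvalue $\lambda_r$. Expanding the resolvent as $\sum_{j=0}^{k_r - 1} N_r^j /(z - \lambda_r)^{j+1}$, the crucial input from \Cref{lemma:QS_character} is that $k_r = 1$ whenever $\Re(\lambda_r) = 0$, so no higher-order pole arises on the imaginary axis. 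For eigenvalues with $\Re(\lambda_r) < 0$ and any $z \in \mh$, one has $|z - \lambda_r| \ge |\Re(\lambda_r)| > 0$, which lets us dominate each $1/|z - \lambda_r|^{j+1}$ by $C_r / |z - \lambda_r|$ with a constant depending only on $\lambda_r$ and $k_r$. Combining the block-wise bounds gives $\|(zI - M)^{-1}\| \le C / \min_{\lambda \in \sigma(M)} |z - \lambda|$, and since quasi-stability implies $\sigma(M) \cap \mh = \emptyset$, this is exactly $\mk(M) < +\infty$.

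For the converse I would argue by contrapositive. Assume $M$ is not quasi-stable; by \Cref{lemma:QS_character} one of two scenarios applies. First, if some eigenvalue $\lambda_0$ satisfies $\Re(\lambda_0) > 0$, then either $\sigma(M) \subset \mh$, in which case $\mk(M) = +\infty$ by the convention in the footnote, or there exists $\mu \in \sigma(M) \setminus \mh$; letting $z \to \lambda_0$ inside a small punctured disk contained in $\mh$, $\|(zI - M)^{-1}\|$ blows up while $\max_{\lambda \in \sigma(M)\setminus\mh} |z - \lambda|^{-1}$ stays bounded, so the ratio diverges. Second, if all eigenvalues lie in the closed left half-plane but some purely imaginary $\lambda_0$ carries a Jordan block of size $k \ge 2$, then choosing $z_\epsilon = \epsilon + i\, \Im(\lambda_0)$ the top-right entry of the corresponding block of $(z_\epsilon I - J)^{-1}$ has modulus $\epsilon^{-k}$, so $\|(z_\epsilon I - M)^{-1}\| \gtrsim \epsilon^{-k}$, whereas $\min_{\lambda \in \sigma(M) \setminus \mh} |z_\epsilon - \lambda|$ equals $\epsilon$ for small $\epsilon$. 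Hence the ratio grows like $\epsilon^{1-k}$ and $\mk(M) = +\infty$.

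The main obstacle I anticipate is the bookkeeping in the forward direction: one has to make sure the block-wise bound $\|(zI - J_r)^{-1}\| \le C_r/|z - \lambda_r|$ can be uniformly consolidated into a single bound by the global quantity $\min_{\lambda \in \sigma(M)} |z - \lambda|^{-1}$. This works because $1/|z - \lambda_r| \le 1/\min_\lambda |z - \lambda|$ for each individual $r$, but the constant absorbed depends on the separation of $\sigma(M)$ from the imaginary axis and on the largest Jordan block size, and it is worth stating this dependence explicitly to keep the argument clean.
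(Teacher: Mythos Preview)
Your proposal is correct and follows essentially the same route as the paper: both directions are handled via the Jordan form, expanding the resolvent of each block as $\sum_{j}(z-\lambda_r)^{-(j+1)}N^j$, invoking \Cref{lemma:QS_character} to force $k_r=1$ on the imaginary axis, and using $|z-\lambda_r|\ge |\Re(\lambda_r)|$ for the strictly-left eigenvalues; the converse in the paper is likewise the contrapositive you outline, analyzing the blow-up rate of the ratio as $z\to\lambda_r$ in the two bad cases. Your final ``obstacle'' is a non-issue here since $M$ is a single fixed matrix (so $C=\max_r C_r\cdot\|S\|\,\|S^{-1}\|$ works), and note the typo in that paragraph: the global quantity should be $\max_{\lambda}|z-\lambda|^{-1}$, not $\min$.
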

\begin{proof}
  First we assume that the matrix $M$ is quasi-stable. 
  \Cref{lemma:QS_character} yields that the spectrum $\sigma(M)$ is 
  a subset of the closed left half-plane $\mh^\complement$.
  Let $J = S M S^{-1}$ be the Jordan matrix of $M$. We notice that 
  \begin{displaymath}
    \norm{(z I - M)^{-1}} \le \norm{S^{-1}} \cdot 
    \norm{(z I - J)^{-1}} \cdot \norm{S}.
  \end{displaymath}
  To estimate $\norm{(z I - J)^{-1}}$, it suffices to estimate the 
  resolvent of each Jordan block. Let 
  \begin{displaymath}
    J_r = \begin{pmatrix}
      \lambda_r & 1 & & \\
      & \ddots & \ddots & \\
      & & \ddots & 1 \\
      & & & \lambda_r
    \end{pmatrix} = \lambda_r I + N \in \mc^{k \times k}
  \end{displaymath}
  be an arbitrary Jordan block of $J$.
  By direct calculation, one has that 
  \begin{equation}\label{equ:resolventJr}
    (z I - J_r)^{-1} = \sum_{j = 0}^{k - 1} 
      (z - \lambda_r)^{-(j + 1)} N^j, \quad z \in \mh.
  \end{equation}
  If $\Re(\lambda_r) < 0$, then for each $z \in \mh$, one has that 
  \begin{displaymath}
    \begin{aligned}
      \norm{(z I - J_r)^{-1}} & \le \left( 
        \sum_{j = 0}^{k - 1} |z - \lambda_r|^{-j} \norm{N}^j \right)
        \cdot |z - \lambda_r|^{-1} \\
      & \le \left( \sum_{j = 0}^{k - 1} |\Re(\lambda_r)|^{-j} 
        \norm{N}^j \right) \cdot 
        \max_{\lambda \in \sigma(M) \setminus \mh} |z - \lambda|^{-1}.
    \end{aligned}
  \end{displaymath}
  If $\Re(\lambda_r) = 0$, then \Cref{lemma:QS_character} yields that 
  the order of $J_r$ is $k = 1$, and thus 
  \begin{displaymath}
    \norm{(z I - J_r)^{-1}} = |z - \lambda_r|^{-1} \le 
    \max_{\lambda \in \sigma(M) \setminus \mh} |z - \lambda|^{-1},
    \quad z \in \mh.
  \end{displaymath}
  Therefore, we obtain that $\mk(M) < + \infty$.

  Conversely, we assume that $\mk(M) < + \infty$. Let $\lambda_r$ be 
  an arbitrary eigenvalue of $M$, and $J_r \in \mc^{k \times k}$ be 
  the largest Jordan block corresponding to $\lambda_r$. By 
  \eqref{equ:resolventJr}, one can obtain that 
  \begin{displaymath}
    \norm{(z I - M)^{-1}} \sim |z - \lambda_r|^{-k}, \quad 
    \text{ as } z \rightarrow \lambda_r.
  \end{displaymath}
  If $\Re(\lambda_r) > 0$, then 
  \begin{displaymath}
    \frac{\norm{(z I - M)^{-1}}}
    {\max_{\lambda \in \sigma(M) \setminus \mh} |z - \lambda|^{-1}}
    \sim |z - \lambda_r|^{-k}, \quad \text{ as } 
    z \rightarrow \lambda_r.
  \end{displaymath}
  If $\Re(\lambda_r) = 0$ and $k > 1$, then 
  \begin{displaymath}
    \frac{\norm{(z I - M)^{-1}}}
    {\max_{\lambda \in \sigma(M) \setminus \mh} |z - \lambda|^{-1}}
    \sim |z - \lambda_r|^{- k + 1}, \quad \text{ as }
    z \rightarrow \lambda_r.
  \end{displaymath}
  Both cases yield a contradiction, and then the quasi-stability of 
  the matrix $M$ follows from \Cref{lemma:QS_character}.
\end{proof}

Here is an elementary fact from linear algebra. It asserts that 
for a set of unit upper triangular matrices
\footnote{Recall that a unit upper triangular matrix is an upper 
triangular matrix with $1$ on the diagonal.}
in $\mc^{n \times n}$, uniform boundedness is equivalent to uniform 
boundedness of inverse.
\begin{lemma}\label{lemma:unittri}
  If the matrix $A \in \mc^{n \times n}$ is a unit upper triangular 
  matrix with $\norm{A} \le \alpha$, then one has that 
  $\norm{A^{-1}} \le (n \alpha)^{n - 1}$.
\end{lemma}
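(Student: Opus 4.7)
The plan is to expand $A^{-1}$ as a finite Neumann series and bound each term by combining entrywise estimates with a row/column-sum passage to the spectral norm. Write $A = I + N$, where $N = A - I$ is strictly upper triangular. Since a strictly upper triangular $n \times n$ matrix is nilpotent of index at most $n$, we have $N^n = 0$, and the Neumann series terminates:
\begin{displaymath}
  A^{-1} = \sum_{k = 0}^{n - 1} (-1)^k N^k.
\end{displaymath}

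The first step is an entrywise bound on $N$. Since $|M_{ij}| \le \norm{M}$ for any matrix $M$ and any indices $i, j$, we obtain $|N_{ij}| = |A_{ij}| \le \alpha$ for $i < j$, and $N_{ij} = 0$ otherwise. Expanding the matrix power as
\begin{displaymath}
  (N^k)_{ij} = \sum_{i < i_1 < \cdots < i_{k-1} < j}
    N_{i, i_1} N_{i_1, i_2} \cdots N_{i_{k-1}, j}
\end{displaymath}
and counting strictly increasing sequences of intermediate indices yields $|(N^k)_{ij}| \le \binom{j - i - 1}{k - 1} \alpha^k$ whenever $j - i \ge k$, and zero otherwise.

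Next I would translate this entrywise bound into an operator norm bound for $N^k$. Summing over $j$ in a fixed row and invoking the hockey-stick identity collapses the binomial coefficients to $\binom{n - 1}{k}$, giving $\norm{N^k}_\infty \le \binom{n - 1}{k} \alpha^k$, and the analogous column sum gives $\norm{N^k}_1 \le \binom{n - 1}{k} \alpha^k$. Combining these through the inequality $\norm{M} \le \sqrt{\norm{M}_1 \norm{M}_\infty}$ produces the clean estimate $\norm{N^k} \le \binom{n - 1}{k} \alpha^k$. Summing over $k$ and invoking the binomial theorem yields
\begin{displaymath}
  \norm{A^{-1}} \le \sum_{k = 0}^{n - 1} \binom{n - 1}{k} \alpha^k
  = (1 + \alpha)^{n - 1}.
\end{displaymath}

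Finally, because $A$ has ones on the diagonal we have $\norm{A} \ge 1$, so $\alpha \ge 1$; hence for $n \ge 2$ the inequality $1 + \alpha \le n \alpha$ gives $(1 + \alpha)^{n - 1} \le (n \alpha)^{n - 1}$, and the case $n = 1$ is trivial. I expect the main obstacle to be not conceptual but bookkeeping: one must resist the temptation to bound $\norm{N^k}$ by $n$ times its maximum entry, which would cost an extra factor of $n$ and spoil the clean $(n \alpha)^{n - 1}$ estimate. Routing the argument through both the $\ell^1$ and $\ell^\infty$ row/column sums, so that the binomial theorem can collapse the final sum exactly, is what keeps the exponent tight.
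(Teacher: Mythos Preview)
Your proof is correct and proceeds by a genuinely different route from the paper. The paper factors $A = I + U$ as a product of elementary rank-one perturbations $(I + u_n e_n^\top)(I + u_{n-1} e_{n-1}^\top)\cdots(I + u_2 e_2^\top)$, inverts each factor explicitly as $I - u_j e_j^\top$, and bounds each inverse factor by $n\alpha$ via the $\norm{\cdot}_1$--$\norm{\cdot}_2$ norm equivalence; multiplying the $n-1$ factors gives $(n\alpha)^{n-1}$ directly. Your approach instead truncates the Neumann series, counts strictly increasing index paths to bound the entries of $N^k$, and passes through both the $\ell^1$ and $\ell^\infty$ operator norms to obtain $\norm{N^k} \le \binom{n-1}{k}\alpha^k$. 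This buys you a strictly sharper intermediate estimate, $\norm{A^{-1}} \le (1+\alpha)^{n-1}$, before you relax it to the stated bound using $\alpha \ge 1$. The paper's factorization is shorter and more structural, while your combinatorial argument extracts more from the nilpotency of $N$ and would be the better choice if the constant mattered downstream.
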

\begin{proof}
  Let the column partitioning of the strictly upper triangular 
  component of $A$ be 
  \begin{displaymath}
    U := A - I = \begin{pmatrix}
      0 & u_2 & \cdots & u_n
    \end{pmatrix}.
  \end{displaymath}
  Denote the $i$th column of the identity matrix $I$ as $e_i$.
  It is clear that $e_i^\top u_j = 0$ for $2 \le j \le i \le n$.
  Therefore, one can obtain that 
  \begin{displaymath}
    A = I + u_n e_n^\top + u_{n - 1} e_{n - 1}^\top + 
    \ldots + u_2 e_2^\top = \left( I + u_n e_n^\top \right) 
    \left( I + u_{n - 1} e_{n - 1}^\top \right)
    \ldots \left( I + u_2 e_2^\top \right),
  \end{displaymath}
  and that 
  \begin{displaymath}
    I = \left( I + u_j e_j^\top \right) 
    \left( I - u_j e_j^\top \right), \quad 
    \text{ for } j = 2, 3, \ldots, n,
  \end{displaymath}
  which yields that 
  \begin{displaymath}
    A^{-1} = \left( I - u_2 e_2^\top \right)
    \left( I - u_3 e_3^\top \right) \ldots 
    \left( I - u_n e_n^\top \right).
  \end{displaymath}
  By the equivalence of $1$-norm and $2$-norm for matrices, one can 
  obtain that
  \begin{displaymath}
    \norm{I - u_j e_j^\top} \le \sqrt{n} \norm{I - u_j e_j^\top}_1 
    \le \sqrt{n} \norm{A}_1 \le n \norm{A} \le n \alpha, 
  \end{displaymath}
  for each $j = 2, 3, \ldots, n$.
  Therefore, one can obtain that 
  \begin{displaymath}
    \norm{A^{-1}} \le \norm{I - u_2 e_2^\top}
    \norm{I - u_3 e_3^\top} \ldots 
    \norm{I - u_n e_n^\top} \le (n \alpha)^{n - 1},
  \end{displaymath}
  which completes the proof.
\end{proof}

We are now set to prove our main result.
\begin{proof}[Proof of \cref{thm:main}]
  First we assume that the set of matrices $\mf$ satisfies 
  \eqref{equ:uniformKM}. By \Cref{lemma:QS_KM}, each matrix in $\mf$ 
  is quasi-stable. For each $M \in \mf$, $z \in \mh$, and 
  $\lambda \in \sigma(M) = \sigma(M) \setminus \mh$, one has that 
  \begin{displaymath}
    0 < \Re(z) \le \Re(z - \lambda) \le |z - \lambda|,
  \end{displaymath}
  which yields that 
  \begin{displaymath}
    \sup_{M \in \mf} \sup_{z \in \mh} \left( \Re(z) \cdot 
    \norm{(z I - M)^{-1}} \right) \le \sup_{M \in \mf} 
    \sup_{z \in \mh} \frac{\norm{(z I - M)^{-1}}}
    {\max_{\lambda \in \sigma(M) \setminus \mh} |z - \lambda|^{-1}}
    = \sup_{M \in \mf} \mk(M) < + \infty.
  \end{displaymath}
  Therefore, the set of matrices $\mf$ is uniformly quasi-stable 
  thanks to \Cref{thm:KMT}.

  Conversely, we assume that $\mf$ is uniformly quasi-stable. At this 
  time, the spectrum $\sigma(M)$ is a subset of the closed left 
  half-plane $\mh^\complement$ for each $M \in \mf$. 
  Using the notations in the third 
  condition of \Cref{thm:KMT}, we denote the diagonal component and 
  the strictly upper triangular component of $S M S^{-1}$ by $D$ and 
  $N$, respectively, that is, 
  \begin{displaymath}
    D = D(M) = \diag \left\{ b_{11}, b_{22}, \ldots, b_{nn} \right\},
    \quad N = N(M) = S M S^{-1} - D.
  \end{displaymath}
  By direct calculation, for any $z \in \mh$, one has that 
  \begin{displaymath}
    (z I - M)^{-1} = S^{-1} (z I - D - N)^{-1} S = 
    S^{-1} \left( I - (z I - D)^{-1} N \right)^{-1} (z I - D)^{-1} S,
  \end{displaymath}
  which yields that 
  \begin{displaymath}
    \norm{(z I - M)^{-1}} \le \kappa(S) \cdot 
    \norm{\left( I - (z I - D)^{-1} N \right)^{-1}} \cdot 
    \norm{(z I - D)^{-1}},
  \end{displaymath}
  where the condition number of $S$ can be estimated as 
  \begin{displaymath}
    \kappa(S) := \norm{S} \cdot \norm{S^{-1}} \le 
    \frac{\left( \norm{S} + \norm{S^{-1}} \right)^2}{4} \le 
    \frac{K_{31}^2}{4}.
  \end{displaymath}
  For each $1 \le i < j \le n$, the absolute value of the $(i,j)$-th 
  entry of $(z I - D)^{-1} N$ is 
  \begin{equation}\label{equ:entry_estimate}
    \frac{|b_{ij}|}{|z - b_{ii}|} \le 
    \frac{K_{32} |\Re(b_{ii})|}{|\Re(z) - \Re(b_{ii})|} \le K_{32}.
  \end{equation}
  \Cref{lemma:unittri} yields that there exists a constant $C > 0$ 
  dependent on $n$ and $K_{32}$ such that 
  \begin{displaymath}
    \norm{\left( I - (z I - D)^{-1} N \right)^{-1}} \le C,
  \end{displaymath}
  since the matrix $I - (z I - D)^{-1} N$ is unit upper triangular.
  We notice that 
  \begin{displaymath}
    \norm{(z I - D)^{-1}} = \max_{\lambda \in \sigma(M)} 
    |z - \lambda|^{-1} = 
    \max_{\lambda \in \sigma(M) \setminus \mh} 
    |z - \lambda|^{-1}.
  \end{displaymath}
  Therefore, we have that 
  \begin{displaymath}
    \sup_{M \in \mf} \mk(M) \le \frac{K_{31}^2 C}{4} < + \infty,
  \end{displaymath}
  which completes the proof of the theorem.
\end{proof}
\section{Discussions}
\label{sec:disccussion}
In this section, we generalize our main theorem and discuss its 
relationships with several previous results.

\subsection{Resolvent estimate in the resolvent set}
\label{sec:remark_miller}
From the proof of the necessity part of \Cref{thm:main}, one can 
observe that it is not essential to take $z$ in the open right 
half-plane $\mh$ to obtain a resolvent estimate as in 
\eqref{equ:main_resolvent}. Given a matrix $M \in \mf$ and a number 
$r > 0$, let us denote 
\begin{displaymath}
  \ms(M, r) := 
  \left\{ z \in \mc : \max_{\lambda \in \sigma(M)} 
  \frac{|\Re(\lambda)|}{|z - \lambda|} \le \frac{1}{r} \right\}.
\end{displaymath}
The following theorem provides a resolvent estimate at any point in 
the resolvent set. Similar results can be found in 
\cite{miller_resolvent_1968}.
\begin{theorem}\label{thm:main_miller}
  If a set of matrices $\mf$ is uniformly quasi-stable, then there 
  exists a positive constant $K$ such that 
  \begin{displaymath}
    \norm{(z I - M)^{-1}} \le K \left( 1 + \frac{1}{r} \right)^{n - 1}
    \max_{\lambda \in \sigma(M)} |z - \lambda|^{-1},
  \end{displaymath}
  for all $M \in \mf$, $r > 0$ and $z \in \ms(M, r)$.
\end{theorem}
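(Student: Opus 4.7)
The plan is to follow the same strategy used for the necessity direction of \Cref{thm:main}, but to replace the crude entry bound $K_{32}$ on $(zI-D)^{-1}N$ (which is all that was available for $z\in\mh$) by the sharper bound $K_{32}/r$ that the set $\ms(M,r)$ was designed to yield, and then propagate the dependence on $r$ through \Cref{lemma:unittri}. Since the overall skeleton of the argument is already in place, no fundamentally new idea is needed; the work lies in quantifying how the extra slack in the ratio $|\Re(\lambda)|/|z-\lambda|$ cascades to a factor $(1+1/r)^{n-1}$.

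Concretely, for each $M\in\mf$ I would invoke condition (3) of \Cref{thm:KMT} to produce $S=S(M)$ with $\norm{S}+\norm{S^{-1}}\le K_{31}$ such that $SMS^{-1}=D+N$, where $D=\diag(\lambda_1,\dots,\lambda_n)$ lists the eigenvalues in the prescribed order and the strictly upper triangular part $N$ satisfies $|N_{ij}|\le K_{32}|\Re(\lambda_i)|$. For any $z$ in the resolvent set of $M$ one then has the identity
\[
(zI-M)^{-1} = S^{-1}\bigl(I-(zI-D)^{-1}N\bigr)^{-1}(zI-D)^{-1}S,
\]
together with the exact equality $\norm{(zI-D)^{-1}}=\max_{\lambda\in\sigma(M)}|z-\lambda|^{-1}$, so the whole problem reduces to a uniform bound on $\bigl\|(I-(zI-D)^{-1}N)^{-1}\bigr\|$.

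The pivotal computation is the estimate of the entries of $(zI-D)^{-1}N$: for $i<j$ the $(i,j)$-entry equals $N_{ij}/(z-\lambda_i)$, whose modulus is at most
\[
\frac{K_{32}|\Re(\lambda_i)|}{|z-\lambda_i|}\le \frac{K_{32}}{r}
\]
by the very definition of $\ms(M,r)$ (the case $\Re(\lambda_i)=0$ is harmless because it already forces $N_{ij}=0$ for every $j>i$). The matrix $A:=I-(zI-D)^{-1}N$ is therefore unit upper triangular with $\norm{A}\le 1+nK_{32}/r$ (via $\norm{A}\le n\max_{i,j}|A_{ij}|$), and \Cref{lemma:unittri} gives
\[
\norm{A^{-1}}\le \bigl(n(1+nK_{32}/r)\bigr)^{n-1}\le C\bigl(1+1/r\bigr)^{n-1},
\]
where $C$ depends only on $n$ and $K_{32}$, using the elementary inequality $1+nK_{32}/r\le(1+nK_{32})(1+1/r)$.

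Combining these ingredients with the condition-number bound $\kappa(S)\le K_{31}^2/4$ already exploited in \Cref{thm:main} produces the claimed estimate with $K=K_{31}^2 C/4$. The deepest step of the argument is really the one already performed in \Cref{thm:main}, namely the recognition that \Cref{lemma:unittri} tames the inverse of a unit upper triangular matrix through its norm alone; here that lemma is used twice as effectively because the off-diagonal bound has been improved from $K_{32}$ to $K_{32}/r$. The only mild technical obstacle I anticipate is the bookkeeping needed to collapse the various constants depending on $n,\,K_{31},\,K_{32}$ into a single $K$ independent of $M$ and $r$, but this is routine and is already modeled by the proof of \Cref{thm:main}.
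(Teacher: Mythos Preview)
Your proposal is correct and follows essentially the same approach as the paper: invoke condition~(3) of \Cref{thm:KMT}, factor $(zI-M)^{-1}$ through $(zI-D)^{-1}$ and the unit upper triangular matrix $I-(zI-D)^{-1}N$, use the definition of $\ms(M,r)$ to bound the off-diagonal entries by $K_{32}/r$, and apply \Cref{lemma:unittri} to extract the $(1+1/r)^{n-1}$ factor. The only (harmless) slip is in the intermediate bound $\norm{A}\le 1+nK_{32}/r$, which does not quite follow from $\norm{A}\le n\max_{i,j}|A_{ij}|$ when $K_{32}/r<1$; replacing it by $\norm{A}\le n(1+K_{32}/r)$ fixes this and still yields a constant times $(1+1/r)^{n-1}$ after \Cref{lemma:unittri}.
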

\begin{proof}
  Let us take $M \in \mf$, $r > 0$ and $z \in \ms(M, r)$. Using the 
  notations in the third condition of \Cref{thm:KMT}, for each 
  $1 \le i < j \le n$, one can obtain that 
  \begin{displaymath}
    \frac{|b_{ij}|}{|z - b_{ii}|} \le 
    \frac{K_{32} |\Re(b_{ii})|}{|z - b_{ii}|} \le \frac{K_{32}}{r},
  \end{displaymath}
  which is analogous to \eqref{equ:entry_estimate}. 
  According to \Cref{lemma:unittri}, it follows that there exists a 
  constant $C$ dependent on $n$ and $K_{32}$ such that 
  \begin{displaymath}
    \norm{\left( I - (z I - D)^{-1} N \right)^{-1}} \le 
    C \left( 1 + \frac{1}{r} \right)^{n - 1}.
  \end{displaymath}
  Hence the proof is completed by a similar argument in the proof of 
  \Cref{thm:main}.
\end{proof}

\subsection{Power-bounded matrices}
Analogous results for power-bounded matrices can be proved in a 
similar way. Let us recall the following Kreiss matrix theorem 
\cite{kreiss_stabilitatsdefinition_1962} for power-bounded matrices.
\begin{theorem}[Kreiss matrix theorem. See Satz 4.1 of 
  \cite{kreiss_stabilitatsdefinition_1962}]
  \label{thm:KMT_power}
Let $\mf$ denote a set of matrices in $\mc^{n \times n}$.
The following four conditions are equivalent.
\begin{enumerate}
  \item There exists a constant $K_1$ such that  
    $\norm{M^\nu} \le K_1$ for all $M \in \mf$ and 
    $\nu \in \mn$.
  \item There exists a constant $K_2$ such that 
    \begin{displaymath}
      \norm{(z I - M)^{-1}} \le \frac{K_2}{|z| - 1}, 
    \end{displaymath}
    for all $M \in \mf$ and $z \in \mc$ with $|z| > 1$.
  \item There exist constants $K_{31}$, $K_{32}$ such that 
    for each $M \in \mf$, there exists a 
    transformation $S = S(M)$ with 
    $\norm{S} + \norm{S^{-1}} \le K_{31}$,
    the matrix $S M S^{-1}$ is upper triangular,
    \begin{displaymath}
      S M S^{-1} = \begin{pmatrix}
        b_{11} & b_{12} & \cdots & b_{1n} \\
              & b_{22} & \cdots & b_{2n} \\
              &    & \ddots & \vdots \\
              &    &    & b_{nn} \\
      \end{pmatrix},
    \end{displaymath}
    the diagonal is ordered, 
    \begin{displaymath}
      1 \ge |b_{11}| \ge |b_{22}| \ge \cdots \ge 
      |b_{nn}|,
    \end{displaymath}
    and the upper diagonal elements satisfy the estimate 
    \begin{displaymath}
      |b_{ij}| \le K_{32} (1 - |b_{ii}|), \quad 
        1 \le i < j \le n.
    \end{displaymath}
  \item There exists a positive constant $K_4$ such that 
    for each $M \in \mf$, there exists a Hermitian 
    matrix $H = H(M)$ such that 
    \begin{displaymath}
      K_4^{-1} I \le H \le K_4 I, \quad M^* H M \le H.
    \end{displaymath}
\end{enumerate}
\end{theorem}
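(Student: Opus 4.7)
The plan is to establish the four-way equivalence by the cycle
$(1) \Rightarrow (2) \Rightarrow (3) \Rightarrow (4) \Rightarrow (1)$,
closely mirroring the proof of \Cref{thm:KMT} with the open right
half-plane $\mh$ replaced by the exterior of the unit disk, the
continuous semigroup $e^{M t}$ replaced by the discrete powers
$M^\nu$, and the role of $\Re(z)$ taken by $|z| - 1$ (respectively,
the role of $|\Re(b_{ii})|$ taken by $1 - |b_{ii}|$).

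For $(1) \Rightarrow (2)$, I would use the Neumann series: for
$|z| > 1$,
\begin{displaymath}
  (z I - M)^{-1} = \sum_{\nu = 0}^{\infty} z^{-(\nu + 1)} M^\nu,
\end{displaymath}
and $\norm{M^\nu} \le K_1$ gives
$\norm{(z I - M)^{-1}} \le K_1/(|z| - 1)$ directly. For
$(4) \Rightarrow (1)$, iterating $M^* H M \le H$ produces
$(M^\nu)^* H M^\nu \le H$ for every $\nu \in \mn$, and combined with
$K_4^{-1} I \le H \le K_4 I$ this yields $\norm{M^\nu} \le K_4$. For
$(3) \Rightarrow (4)$, I would seek $H$ of the form $S^* D S$, where
$D$ is a positive diagonal matrix whose entries are geometrically
spaced in a single parameter $\epsilon$. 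Writing $T = S M S^{-1}$
with diagonal entries $b_{ii}$, the inequality $M^* H M \le H$
becomes $T^* D T \le D$; the estimates
$|b_{ij}| \le K_{32}(1 - |b_{ii}|)$ and $|b_{ii}| \le 1$ together
with the ordering of the diagonal allow the off-diagonal
contributions to $T^* D T$ to be absorbed into the diagonal slack
$D_{ii}(1 - |b_{ii}|^2)$ once $\epsilon$ is small enough in terms of
$n$ and $K_{32}$, via a Gershgorin-style argument.

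The main obstacle is $(2) \Rightarrow (3)$. The strategy is to first
take a unitary Schur decomposition $U M U^* = T'$ whose diagonal
entries are ordered by decreasing modulus, and then compose with a
diagonal similarity to obtain $S M S^{-1} = T$ satisfying the
required off-diagonal bounds. The crux is to extract the estimate
$|b_{ij}| \le K_{32}(1 - |b_{ii}|)$ from the resolvent condition.
The key idea is to evaluate $(z I - M)^{-1}$ at test points $z$
just outside the unit circle and close to the eigenvalue $b_{ii}$:
each entry of $(z I - T')^{-1}$ is a rational function in the
$b_{kl}$ with explicit factors of $(z - b_{kk})^{-1}$, so the
inequality $\norm{(z I - M)^{-1}} \le K_2/(|z| - 1)$ translates
into pointwise control of the relevant off-diagonal entries, and
optimizing the choice of $z$ and letting $|z| \to 1^+$ produces the
stated inequality. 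The delicate point is to choose the diagonal
weights so that $\norm{S} + \norm{S^{-1}} \le K_{31}$ holds
uniformly in $M \in \mf$; this requires balancing the rescaled
off-diagonal entries so that no single one blows up, analogous to
the scaling trick used in the proof of \Cref{thm:main}.
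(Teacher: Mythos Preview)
The paper does not prove \Cref{thm:KMT_power}; it is quoted verbatim from \cite{kreiss_stabilitatsdefinition_1962} as a known result and is only \emph{used} in the proof of \Cref{thm:main_power}. There is therefore no proof in the paper to compare your proposal against.

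As for the proposal itself: the implications $(1)\Rightarrow(2)$ and $(4)\Rightarrow(1)$ are correct as written, and the sketch of $(3)\Rightarrow(4)$ via $H=S^{*}DS$ with geometrically spaced diagonal weights is the standard construction and will go through once the Gershgorin bookkeeping is carried out. The step $(2)\Rightarrow(3)$, however, is not yet a workable plan. A unitary Schur factorization followed by a \emph{single} diagonal similarity does not directly yield condition (3): after conjugating by $D=\diag(d_{1},\dots,d_{n})$ the new off-diagonal entries are $b_{ij}=(d_{i}/d_{j})t'_{ij}$, so the requirement $|b_{ij}|\le K_{32}(1-|b_{ii}|)$ forces the ratios $d_{i}/d_{j}$ to depend on the individual eigenvalues and on the Schur entries $t'_{ij}$, and one must then argue that $\norm{D}\cdot\norm{D^{-1}}$ stays bounded uniformly over $\mf$. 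The classical proofs (Kreiss, Morton) do this by induction on $n$: one peels off the eigenvalue $b_{11}$ of largest modulus, uses the resolvent bound at a test point $z$ with $|z|-1$ comparable to $1-|b_{11}|$ to control the first row, shows that the lower $(n-1)\times(n-1)$ block again satisfies a resolvent condition with a controlled constant, and recurses. Your outline does not mention this inductive structure, and the appeal to ``the scaling trick used in the proof of \Cref{thm:main}'' is misplaced---that proof takes the decomposition in condition (3) of \Cref{thm:KMT} as \emph{input} and contains no construction of $S$.
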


Similar to \Cref{sec:remark_miller}, we would like to give a 
resolvent estimate for a set of uniformly power-bounded matrices 
$\mf$ at any point in the resolvent set. Given a matrix $M \in \mf$ 
and a number $r > 0$, let us denote 
\begin{displaymath}
  \mt(M, r) := 
  \left\{ z \in \mc : \max_{\lambda \in \sigma(M)} 
  \frac{1 - |\lambda|}{|z - \lambda|} \le \frac{1}{r} \right\}.
\end{displaymath}
We have the following theorem. The second conclusion of 
\Cref{thm:main_power} can be found in \cite{zarouf_sharpening_2009}.
\begin{theorem}\label{thm:main_power}
  If a set of matrices $\mf$ satisfies any condition of 
  \Cref{thm:KMT_power}, then there exists a positive constant $K$ 
  such that 
  \begin{displaymath}
    \norm{(z I - M)^{-1}} \le K \left( 1 + \frac{1}{r} \right)^{n - 1}
    \max_{\lambda \in \sigma(M)} |z - \lambda|^{-1},
  \end{displaymath}
  for all $M \in \mf$, $r > 0$ and $z \in \mt(M, r)$. In particular, 
  one has that 
  \begin{displaymath}
    \sup_{M \in \mf} \sup_{|z| > 1} 
    \frac{\norm{(z I - M)^{-1}}}{\max_{\lambda \in \sigma(M)} 
    |z - \lambda|^{-1}} < + \infty.
  \end{displaymath}
\end{theorem}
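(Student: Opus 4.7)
The plan is to mirror the structure of the proof of \Cref{thm:main} (more precisely, its generalization in \Cref{thm:main_miller}), replacing the continuous-time Schur form with the discrete-time Schur form supplied by the third condition of \Cref{thm:KMT_power}. Given $M \in \mf$, write $S M S^{-1} = D + N$ where $D = \diag(b_{11}, \ldots, b_{nn})$ and $N$ is the strictly upper triangular part of $S M S^{-1}$, so that for $z$ outside $\sigma(M)$,
\begin{displaymath}
  (z I - M)^{-1} = S^{-1} \left( I - (z I - D)^{-1} N \right)^{-1}
  (z I - D)^{-1} S.
\end{displaymath}
Exactly as before, $\kappa(S) \le K_{31}^2/4$ and $\|(zI - D)^{-1}\| = \max_{\lambda \in \sigma(M)} |z - \lambda|^{-1}$.

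Next I would estimate the entries of $(z I - D)^{-1} N$ for $z \in \mt(M, r)$. The third condition of \Cref{thm:KMT_power} gives $|b_{ij}| \le K_{32}(1 - |b_{ii}|)$, and since $b_{ii} \in \sigma(M)$ the definition of $\mt(M, r)$ yields
\begin{displaymath}
  \frac{|b_{ij}|}{|z - b_{ii}|} \le
  \frac{K_{32} (1 - |b_{ii}|)}{|z - b_{ii}|} \le \frac{K_{32}}{r},
  \quad 1 \le i < j \le n,
\end{displaymath}
which is the direct analogue of \eqref{equ:entry_estimate}. The matrix $I - (z I - D)^{-1} N$ is unit upper triangular, so \Cref{lemma:unittri} applied with $\alpha = 1 + K_{32}/r$ gives a constant $C = C(n, K_{32})$ with $\| (I - (z I - D)^{-1} N)^{-1} \| \le C (1 + 1/r)^{n-1}$. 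Multiplying the three bounds yields the first conclusion.

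For the second conclusion, I would observe that condition~3 of \Cref{thm:KMT_power} forces $\sigma(M) \subset \{|\lambda| \le 1\}$ uniformly in $M \in \mf$, so for every $z$ with $|z| > 1$ and every $\lambda \in \sigma(M)$,
\begin{displaymath}
  |z - \lambda| \ge |z| - |\lambda| \ge 1 - |\lambda|,
\end{displaymath}
which means $z \in \mt(M, 1)$. Specializing the first part to $r = 1$ then gives a uniform bound with constant $K \cdot 2^{n-1}$, proving the second conclusion.

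I do not expect serious obstacles: the only place where the discrete setting could misbehave is in the entry-wise estimate, and the definition of $\mt(M, r)$ is tailored precisely so that the factor $1 - |b_{ii}|$ from condition~3 cancels against the denominator $|z - b_{ii}|$, exactly as the factor $|\Re(b_{ii})|$ cancels against $|\Re(z) - \Re(b_{ii})|$ in the continuous case. The only mild subtlety is noting that eigenvalues on the unit circle (where $1 - |b_{ii}| = 0$) correspond, via \Cref{thm:KMT_power}, to $1 \times 1$ diagonal blocks, so the estimate above remains trivially valid there.
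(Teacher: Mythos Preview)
Your proposal is correct and follows essentially the same route as the paper: you use the discrete Schur form from condition~3 of \Cref{thm:KMT_power}, bound the entries of $(zI-D)^{-1}N$ via $|b_{ij}| \le K_{32}(1-|b_{ii}|)$ and the definition of $\mt(M,r)$, apply \Cref{lemma:unittri}, and then observe that $|z|>1$ forces $z \in \mt(M,1)$. The only superfluous remark is the last one about $1\times 1$ blocks on the unit circle: when $|b_{ii}|=1$ the bound $|b_{ij}|\le K_{32}(1-|b_{ii}|)$ already gives $b_{ij}=0$, so the entry estimate holds trivially without any appeal to Jordan structure.
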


\begin{proof}
  Let us take $M \in \mf$, $r > 0$ and $z \in \mt(M, r)$. 
  Using the notations in the third condition of \Cref{thm:KMT_power}, 
  for each $1 \le i < j \le n$, one can obtain that 
  \begin{displaymath}
    \frac{|b_{ij}|}{|z - b_{ii}|} \le 
    \frac{K_{32} (1 - |b_{ii}|)}{|z - b_{ii}|} \le \frac{K_{32}}{r}.
  \end{displaymath}
  The remainder of the proof of the first conclusion is analogous to 
  that in \Cref{thm:main_miller}. The second conclusion follows from 
  an observation that 
  \begin{displaymath}
    |z - \lambda| \ge |z| - |\lambda| > 1 - |\lambda| \ge 0,
  \end{displaymath}
  for each $\lambda \in \sigma(M)$ and $z \in \mc$ with $|z| > 1$, 
  and thus $z \in \mt(M, 1)$.
\end{proof}

\section{Conclusions}
\label{sec:concl}
We establish a sharper resolvent estimate under the same conditions 
as those of the Kreiss matrix theorem. 
The new resolvent estimate can be used to derive sharper estimates 
for well-posed Cauchy problems. 
As future work, it is unknown how the sharpest constant $K$ in 
the estimate \eqref{equ:main_resolvent} depends on the dimension $n$. 
It would also be interesting to know how the results in this 
paper can be generalized to certain classes of linear operators on 
Hilbert space. 

\section*{Acknowledgments}
The author is greatly indebted to Prof. Ruo Li for his wholehearted 
guidance and support and for his valuable suggestions on the 
presentation of this paper, and to Dr. Yizhou Zhou for many helpful 
and insightful discussions and for reading an earlier version of this 
paper.

\bibliographystyle{siamplain}
\bibliography{ref}

\end{document}